\newtheorem{tm}{Theorem}[section]
\newtheorem{defin}{Definition}
\newtheorem{remark}{Remark}[section]
\numberwithin{equation}{section}
\author{Mirko D'Ovidio}
        \address{Department of Basic and Applied Sciences for Engineering\newline
Sapienza University of Rome\newline via A. Scarpa 10, Rome, Italy}
        \email[Corresponding author]{mirko.dovidio@uniroma1.it}
\author{Paola Loreti}
        \address{Department of Basic and Applied Sciences for Engineering\newline
Sapienza University of Rome\newline via A. Scarpa 16, Rome, Italy}
        \email{paola.loreti@uniroma1.it}
\begin{document}

\title{  Solutions of   fractional logistic equations\\ by 
Euler's numbers\\ }
\maketitle

\begin{abstract} 
In this paper, we solve  in the convergence set,  the
fractional logistic
equation making use of  Euler's numbers.
To our knowledge, the answer is still an open question.
The key point is that the  coefficients  can be
connected with Euler's numbers, and then they  can be explicitly given.
The constrained of our approach  is that the formula is not valid outside
the convergence set,
 The idea of the proof consists to   explore some analogies with logistic
function and Euler's numbers,
 and then to generalize them in  the fractional case.
\end{abstract}

{\bf Keywords:}  Euler's numbers, Biological Application, Fractional logistic equation.

{\bf AMS subject classification:} 11B68, 78A70, 26A33
\section{Introduction}
\subsection{The logistic function}
A logistic function is
$$  u(t)=\frac {u_0 }   {u_0+(1-u_0)e^{-Mt} }$$
with  $M$ a positive constant and $u_0$ positive and less than 1.
The function was introduced by Pierre Francois Verhulst \cite{Ve}  to
model the  population growth.
At the beginning  of the process the growth of the population is fast;
then, as saturation process begins, the growth slows, and then growth is
close to be flat.

The logistic function is solution of the logistic differential equation 
$$u'(t)=M(u(t)-u^2(t))\,\,\,\ $$
with initial condition
$$u(0)=u_0. $$

The key assumptions in the logistic model are
 \begin{itemize}
 \item
 The population is composed by individual not distinguishable;
\item
 The population is isolated;
 \item
Self-limiting growth, that is  an intrinsic mechanism of saturation holds
  when the density of population reaches
a certain level.
 \end{itemize}
These basic assumptions may be checked in laboratory for biological
diffusion. At least for bounded time they agree to the experience, hence they may be adopted to describe phenomena  as biological models of tumour growth (\cite{tumour1, tumour2, tumour3,kirsc}). 
 As well many processes may be modeled by  the logistic differential
equation, or  generalization of it, and the  applications are wide  and in different field of applications  \cite{thieme}.

Sharing the applications with the logistic case, the fractional equation
is a model for the growth of a given population, describing the population
behavior and showing an increase, a saturation and a flat asymptotic
behavior. The global shape is also respected by the fractional logistic
case by numerical evidence;
however, some peculiar differences show that the fractional model is a
good candidate to model  a memory effect on the population (see \eqref {integrodif0}) and that the
fractional order  may be modified  along the
process in order to constrain the growth (see for example \cite{nostro1} for the estimation of fractional order by observations).

The problem to give a solution of the fractional logistic
equation was unsolved and several attempts have been done (see for
instance \cite{attempt1, attempt2, according1, West}).
A close answer  from an empirical point of view is contained in \cite{OrtBen}. The solution we propose here
has  an exact representation given by a closed formula for the coefficients in the convergence set.  
The convergence analysis is stated in Theorem \ref{main}

A model considering a modified logistic equation
has been also treated in \cite{nostro2}.

To our knowledge, from a mathematical point of view,  the fractional
logistic function is undefined.
This paper answers to the question for limited time.
The observation is that for any positive $t$  less than $\pi$, denoted by
$u$ the logistic function
(with all the parameters simplified) the equality holds true
\begin{align}
u(t) = \sum_{k\geq 0}  \frac{E_k}{k!}{t^{ k}},
\label{first-u}
\end{align}
with $E_k$ the Euler's numbers.
In the next subsection we explore this point.

\subsection{The logistic fractional function}
In this subsection we introduce the logistic fractional function.
 As usual by  $\Gamma$ we denote
   $$ \Gamma  (x)=\int_0^{+\infty}t^{x-1}e^{-t} dt  $$
for any real positive $x$.

Let $R>0$ fixed. In $(0,R)$, we introduce the function
\begin{align}
w(t) = \sum_{k\geq 0}  \frac{E^\beta_k}{M^k} \frac{t^{\beta
k}}{\Gamma(\beta k +1)}, \quad M\geq 1, \quad \beta \in (0,1), \quad t <R
\label{serie-goal}
\end{align}
where
\begin{align}
E^\beta_{k+1} = & - \sum_{\begin{subarray}{c} i,j\\ i+j =k
\end{subarray}}\frac{\Gamma( \beta k +1)}{\Gamma(\beta i +1)\Gamma(\beta j
+1)} E^\beta_i E^\beta_j\notag \\
= & - \frac{1}{\beta k +1} \sum_{\begin{subarray}{c} i,j\\ i+j =k
\end{subarray}} \frac{E^\beta_i E^\beta_j }{B(\beta i +1, \beta j +1)},
\quad k \in 2\mathbb{N} \label{eulerBetaN}
\end{align}
with the logistic constrains
\begin{equation}
\begin{split}
E^\beta_1 & =   E^\beta_0 - (E^\beta_0)^2,\\
E^\beta_0 & =  1/2.
\end{split}
\label{log-constr}
\end{equation}

The result we obtain is that,
in the convergence set,  the function $w$ is solution (in the Caputo sense) of the fractional logistic
equation.

 We recall that the Caputo fractional derivative is given by
\begin{align*}
 {^*D}^\beta_t w(t):= \frac{1}{\Gamma(1-\beta)} \int_0^t
\frac{w^\prime(s)}{(t-s)^\beta}ds, \quad t>0, \quad \beta \in (0,1)
\end{align*}
with $w^\prime(s)=dw/ds$.

 Thus, the  fractional logistic equation is
\begin{align}
{^*}D^\beta_t w = w-w^2.
\label{first-frac-log}
\end{align}

 The proof of this result leads to introduce a new class of numbers that
corresponds to Euler's numbers for special value of the fractional order.
This new class that we called generalized  Euler's numbers has an
interest by itself. As we will see in the paper they share with the
Euler's numbers properties helpful to find solutions.
 The method we propose here is suggested by the logistic case and goes
back to the exact solution to the logistic equation in $(0, \pi)$.
However, in the fractional  logistic case other considerations are no
more valid  to solve the problem in the whole positive line.

The fractional equation \eqref{first-frac-log} has the following intersting integro-differential counterpart
\begin{align}
\frac{dw}{d t} = \frac{1}{M} \frac{1}{\Gamma(\beta)} \int_0^t (w(s) -w^2(s)) (t-s)^{\beta -1} ds
\label{integrodif0}
\end{align}
that we also consider further on.

\section{ Logistic equation and Fractional Derivatives}
\subsection{The Euler's numbers}
We first recall the Euler's polynomials
\begin{align*}
E_k(x) = \frac{1}{k+1} \sum_{s=0}^{k} \binom{k+1}{s} 2^s\, B_s(x/2)
\end{align*}
where the Bernoulli polynomials can be written as
\begin{align*}
B_s(x) = \sum_{j=0}^s \binom{s}{j} b_{s-j}\, x^j
\end{align*}
in terms of the Bernoulli numbers $b_j=B_j(0)$ (usually denoted by $B_j^{-}$). In particular, 
\begin{align*}
b_0=1,\quad b_1= -\frac{1}{2}, \quad b_2=\frac{1}{6}, \quad b_3=0, \quad b_4=- \frac{1}{30}, \quad b_5=0, \quad b_6= \frac{1}{42}, \quad b_7=0, \quad b_8=- \frac{1}{30}, \ldots
\end{align*}
We recall the useful formula
\begin{align*}
E_k(x) = \sum_{k=0}^k \binom{k}{s} \frac{E_k}{2^k} \left(x - \frac{1}{2} \right)^k
\end{align*}

\subsection{The logistic equation}
For the sake of simplicity we now assume that $M=1$. Let us consider the function
\begin{align}
u(t) = \frac{e^t}{1+e^t} = \frac{1}{2} \sum_{k\geq 0} E_k(1) \frac{t^k}{k!}, \quad t < \pi
\label{serie-u}
\end{align}
where $E_k(x)$ are the Euler's polynomials defined above. For $k$ odd,  we have that (see \cite[Corollary 3]{dilcher})
\begin{align*}
(-1)^{\frac{k+1}{2}} \frac{\pi^{k+1}}{4 \, \Gamma(k+1)}E_k(x) \to \cos (\pi x)\quad \textrm{ as } k \to \infty.
\end{align*}
We immediately recover the uniform convergence for the series \eqref{serie-u} on compact subsets of $\mathbb{C}$. We observe that, for $\beta =1$,
\begin{align*}
E^1_{k} = \frac{E_k}{2} = \frac{E_k(1)}{2}
\end{align*}
where $E_k=E_k(1)$ are the Euler's polynomials evaluated at $x=1$.

\begin{remark}
Observe that, 
\begin{align*}
U(t) = \sum_{k \geq 0} \left(\frac{a}{g(t)} \right)^k = \frac{g(t)}{a + g(t)} = \left( 1+ e^{-\int_0^t v(s) ds} \right)^{-1}, \quad g(t)<a
\end{align*}
where $v(s)= (\ln g(s))^\prime\, \mathbf{1}_{(g(s)>a)}$ solves the logistic equation
\begin{align*}
U^\prime =  (\ln g)^\prime\, \left( U - U^2 \right), \quad g(t)<a.
\end{align*}
\end{remark}

For the sake of simplicity we equals to one the growth rate and the
carrying capacity considered in the literature. Thus, $(1-u)$ is the
biological potential describing the density dependence, if negative, the
population decreases back to the carrying capacity.
The logistic model may be generalized as follow

\begin{align*}
K^\Phi * u^\prime = f(u)
\end{align*}
where
\begin{align*}
(K^\Phi * u^\prime)(t) := \int_0^t K^\Phi(t-s)\, u^\prime (s)\, ds.
\end{align*}
and
\begin{equation}
f(u)=u(1-u).
\end{equation}

\begin{defin}
A function $\phi : (0, \infty) \mapsto \mathbb{R}$ is a Bernstein function if $\phi$ is of class $C^\infty$, $\phi(\lambda) \geq 0$ for all $\lambda > 0$ and 
\begin{align*}
-(-1)^n  \phi^{(n)}(\lambda) \geq 0 \quad \textrm{for all } n \in \mathbb{N} \textrm{ and } \lambda>0.
\end{align*} 
\end{defin}

We introduce the Bernstein function 
\begin{equation}
\Phi(\lambda) = \int_0^\infty \left( 1 - e^{ - \lambda z} \right) \Pi(dz) 
 \label{LevKinFormula}
\end{equation}
with
\begin{equation}
\frac{\Phi(\lambda)}{\lambda} = \int_0^\infty e^{-\lambda z} \Pi((z, \infty)) dz 
\label{tailSymb}
\end{equation} 
where $\Pi$ on $(0, \infty)$ with $\int (1 \wedge z) \Pi(dz) < \infty$ is the associated L\'evy measure (and $\Pi((z, \infty))$ is the tail of the L\'evy measure $\Pi$). If $\Phi(\lambda) = \lambda^\beta$ for instance, we have that
\begin{align*}
\Pi(dz) = \frac{\beta}{\Gamma(1-\beta)} z^{-\beta- 1} dz \quad \textrm{and} \quad \Pi((z, \infty)) = \frac{z^{-\beta}}{\Gamma(1-\beta)}.
\end{align*}

Let $M>0$ and $w \geq 0$. Let $\mathcal{M}_\omega$ be the set of (piecewise) continuous function on $[0, \infty)$ of exponential order $\omega$ such that $|u(t)| \leq M e^{\omega t}$. Denote by $\widetilde{u}$ the Laplace transform of $u$. Then, we define the operator $\mathfrak{D}^\Phi_t : \mathcal{M}_\omega \mapsto \mathcal{M}_\omega$ such that
\begin{align*}
\int_0^\infty e^{-\lambda t} \mathfrak{D}^\Phi_t u(t)\, dt = \Phi(\lambda) \widetilde{u}(\lambda) - \frac{\Phi(\lambda)}{\lambda} u(0), \quad \lambda > \omega
\end{align*}
where $\Phi$ is given in \eqref{LevKinFormula}. Since $u$ is exponentially bounded, the integral $\widetilde{u}$ is absolutely convergent for $\lambda>\omega$.  By Lerch's theorem the inverse Laplace transforms $u$ and $\mathfrak{D}^\Phi_tu$ are uniquely defined. Since
\begin{align}
\label{PhiConv}
\Phi(\lambda) \widetilde{u}(\lambda) - \frac{\Phi(\lambda)}{\lambda} u(0) = & \left( \lambda \widetilde{u}(\lambda) - u(0) \right) \frac{\Phi(\lambda)}{\lambda}
\end{align}
the operator $\mathfrak{D}^\Phi_t$ can be written as a convolution involving the ordinary derivative and the inverse transform of \eqref{tailSymb} iff $u \in \mathcal{M}_\omega \cap C([0, \infty), \mathbb{R}_+)$ and $u^\prime \in \mathcal{M}_\omega$. Throughout we consider
\begin{align}
\label{general-log}
\mathfrak{D}^\Phi_t u = f(u).
\end{align}
For different definitions and representatuions of the operator $\mathfrak{D}^\Phi_t$ the interested reader can also see the recent works \cite{chen, dovcap, toaldo} and \cite{KST06, pod99} for fractional calculus and derivatives.

\begin{remark}
We observe that for $\Phi(\lambda)=\lambda$ (that is we deal with the ordinary derivative) we have that the equation \eqref{general-log} becomes the logistic equation.
\end{remark}

\begin{remark}
If $\Phi(\lambda)=\lambda^\beta$, the operator $\mathfrak{D}^\Phi_t$ becomes the Caputo fractional derivative
\begin{align*}
\mathfrak{D}^\Phi_t u(t) = {^*D}^\beta_t u(t):= \frac{1}{\Gamma(1-\beta)} \int_0^t \frac{u^\prime(s)}{(t-s)^\beta}ds, \quad t>0, \quad \beta \in (0,1)
\end{align*}
with $u^\prime(s)=du/ds$.  Thus, the equation \eqref{general-log} becomes the equation
\begin{align}
{^*}D^\beta_t u = f(u)
\end{align}
which is well-know in the literature as the fractional logistic equation.
\end{remark}

\begin{remark}
A further example is given by the symbol $\Phi(\lambda)=\lambda^{2\beta} + \lambda^\beta$ for $\beta \in (0, 1/2)$, that is, $\mathfrak{D}^\Phi_t$ becomes the telegraph fractional operator. Since we have infinite Bernstein function, then we can define a number of fractional operators together with the correspond fractional equations \eqref{general-log}. 
\end{remark}
 
\noindent We denote by $D^\beta_t$ the Riemann-Liouville derivative
\begin{align*}
D^\beta_t u(t) := \frac{1}{\Gamma(1-\beta)} \frac{d}{dt} \int_0^t \frac{u(s)}{(t-s)^\beta}ds,  \quad t>0, \quad \beta \in (0,1)
\end{align*}
with Laplace transform 
$$\int_0^\infty e^{-\lambda t} D^\beta_t u(t)\, dt = \lambda^\beta \widetilde{u}(\lambda).$$ 
We recall that 
\begin{align*}
\int_0^\infty e^{-\lambda t} \, {^*D}^\beta_t u(t)\, dt= \lambda^\beta \widetilde{u}(\lambda) - \lambda^{\beta -1}u(0).
\end{align*}

\noindent We also recall the Stirlin's approximation for the Gamma function
\begin{align*}
\Gamma(x) := \int_0^\infty e^{-z} z^{x-1} dz \sim  \sqrt{\frac{2\pi}{x}} \left( \frac{x}{e}\right)^x 
\end{align*}
and the Beta function  
\begin{align}
B(x,y) := \int_0^1 z^{x-1}(1-z)^{y-1}dz \sim \sqrt{2\pi} \frac{x^{x-1/2} y^{y- 1/2}}{(x+y)^{x+y-1/2}}
\label{simBeta}
\end{align}
for large values of $x>0$ and $y>0$. Morever, the following bounds hold true (\cite{boundBetaRef})
\begin{align}
B(x,y) \leq \frac{1}{xy}, \qquad x,y>1
\label{boundBeta}
\end{align}
and (\cite{boundGamma0Ref})
\begin{align}
2^{x -1} \leq \Gamma(x +1) \leq 1, \quad 0 \leq x \leq 1.
\label{boundGamma0}
\end{align}
We also consider the following bounds for the Gamma function (\cite{boundGammaRef})
\begin{align}
\label{boundGamma}
\frac{x^{x-\gamma}}{e^{x-1}}< \Gamma(x) < \frac{x^{x-1/2}}{e^{x-1}}, \quad x>1
\end{align}
where 
\begin{align}
\gamma \approx 0.577215.
\label{E-M-const}
\end{align}
is the Euler-Mascheroni constant. Note that, from \eqref{simBeta},
\begin{align*}
\Gamma(1-\beta)\, | {^*D}^\beta_t u | \leq  \int_0^t | u^\prime(s) | (t-s)^{-\beta} ds \leq  B(1 + \theta, 1-\beta) \quad \textrm{if} \quad | u^\prime(t)| \leq t^\theta, \; \theta>-1
\end{align*}
and, from \eqref{PhiConv}, by Young's inequality,
\begin{align}
\label{YoungSymb}
\int_0^\infty |\mathfrak{D}^\Phi_t u |^p dt \leq \left( \int_0^\infty |u^\prime |^p dt \right) \left( \lim_{\lambda \downarrow 0} \frac{\Phi(\lambda)}{\lambda} \right)^p, \qquad p \in [1, \infty)
\end{align}
where $\lim_{\lambda \downarrow 0} \Phi(\lambda) /\lambda$ is finite only in some cases.

\section{Fractional logistic equation: Main Result and Proof}

\subsection{The fractional equation}
Let $R>0$. We consider the function
\begin{align}
w(t) = \sum_{k\geq 0}  \frac{E^\beta_k}{M^k} \frac{t^{\beta k}}{\Gamma(\beta k +1)}, \quad M\geq 1, \quad \beta \in (0,1), \quad t <R
\label{serie-goal}
\end{align}
where 
\begin{align}
E^\beta_{k+1} = & - \sum_{\begin{subarray}{c} i,j\\ i+j =k \end{subarray}}\frac{\Gamma( \beta k +1)}{\Gamma(\beta i +1)\Gamma(\beta j +1)} E^\beta_i E^\beta_j\notag \\
= & - \frac{1}{\beta k +1} \sum_{\begin{subarray}{c} i,j\\ i+j =k \end{subarray}} \frac{E^\beta_i E^\beta_j }{B(\beta i +1, \beta j +1)}, \quad k \in 2\mathbb{N} \label{eulerBetaN}
\end{align}
with the logistic constrains
\begin{equation}
\begin{split}
E^\beta_1 & =   E^\beta_0 - (E^\beta_0)^2,\\
E^\beta_0 & =  1/2.
\end{split} 
\label{log-constr}
\end{equation}
We refer to the sequence of Euler's fractional numbers $\{E^\beta_n\}_{n \in \mathbb{N}_0}$ as the Euler's $\beta$-numbers (we introduce the Euler's $\Phi$-number further on).

We are not able to prove absolute or uniform convergence of \eqref{serie-goal} in $(0, R)$. Our approach is therefore focused on the Weierstrass M-test in order to obtain different intervals of convergence depending on the fractional order $\beta$ for different values of $\beta$. Thus, we obtain that \eqref{serie-goal} is convergent in some subsets of $(0, R)$, that is $w(t)$ is absolutely convergent $\forall\, t \in (0, r_\beta)$. In particular, we have uniform convergence in $(0, r_\beta)$. Each interval of convergence turns out to be given in terms of $\beta \in (0,1]$ and it depends continuously on $\beta$. As $\beta$ increases to one we recover the solution to the logistic equation and the convergence in $(0, \pi)$ of $u^\prime$ and $u$ in \eqref{first-u} has been extensively investigated in the literature. Concerning the differentiation of $w$ for $\beta \in (0,1)$ we easily have convergence on $(0, r_\beta)$ from \eqref{first-frac-log} and the uniform convergence of the product of uniformly convergent series.

\begin{tm}
\label{main}
Let $\beta \in (0,1)$, $M>1$, $0< r_\beta < R$. Let $\gamma$ be the Euler-Mascheroni constant. 

The series \eqref{serie-goal} uniformly converges in each compact subsets of $(0, r_\beta)$. In particular, 
\begin{itemize}
\item[i)] for $\beta>0$, the radius is given by
\begin{align*}
r_\beta = M \frac{2^\beta}{e^\beta} \left( \frac{3\beta+1}{2\beta +1}\right)^{2\beta + 1/2} (3\beta +1)^{\beta + (1/2-\gamma)} \leq M  c_1 
\end{align*}
where $c_1 \approx 3.074366$ and for $\beta \uparrow1$, $r_\beta \uparrow M c_1$.
\item[ii)] for $\beta > \gamma - 1/2$, the radius is given by
\begin{align*}
r_\beta = M \frac{2}{e^{2\beta} (\beta +1)^{1/2}} \left( \frac{3\beta+1}{2\beta+1} \right)^{2\beta +1/2} \left( 3\beta +1 \right)^{-\beta + \gamma - 1/2} \leq   M c_2
\end{align*}
where $c_2 \approx 3.116623$ and for $\beta \uparrow 1$, $r_\beta \uparrow M c_2$.
\end{itemize}

%
%\begin{align*}
%0 < \rho \leq \left( 3 M^4 \frac{\Gamma(\beta)\Gamma(\beta +1)}{ B(2\beta, \beta)}\right)^{1/4\beta}.
%\end{align*}

The series \eqref{serie-goal} solves the fractional logistic equation
\begin{align}
\label{eq-log-mainTHM}
{^*D}^\beta_t w(t) = \frac{1}{M} \big( w(t) - w^2(t) \big),  \quad w(0)=1/2.
\end{align}
\end{tm}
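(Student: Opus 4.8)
The plan is to treat the two assertions separately, since the convergence statement and the verification of the equation are logically independent (the latter is a formal coefficient identity, the former a quantitative estimate). For convergence I would apply the Weierstrass M-test to the general term $a_k(t)=\frac{E^\beta_k}{M^k}\frac{t^{\beta k}}{\Gamma(\beta k+1)}$, which reduces everything to controlling $b_k:=|E^\beta_k|/\Gamma(\beta k+1)$. Taking absolute values in the first line of \eqref{eulerBetaN} and dividing by $\Gamma(\beta(k+1)+1)$ gives the majorant recursion
\[
b_{k+1} \le \frac{\Gamma(\beta k+1)}{\Gamma(\beta k+\beta+1)}\sum_{i+j=k} b_i\, b_j ,
\]
so I would prove an inductive bound on $b_k$ by feeding in the convolution together with the Gamma-ratio, which I control by the two-sided estimate \eqref{boundGamma} and, on the arguments in $(1,2)$, by \eqref{boundGamma0}. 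Since only $E^\beta_0$ and the odd-indexed numbers are nonzero, the convolution has effectively $\sim k/2$ surviving terms; this is precisely the feature that blocks a clean geometric decay for $\beta<1$ (at $\beta=1$ the Gamma-ratio equals $1$ and cancellation is available) and that forces the M-test/subinterval approach announced before the statement.

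Next I would pin down the explicit $r_\beta$. The recursion produces $E^\beta_1=1/4$ and $E^\beta_3=-\Gamma(2\beta+1)/(16\,\Gamma(\beta+1)^2)$, whose normalizations already involve $\Gamma(\beta+1),\Gamma(2\beta+1),\Gamma(3\beta+1)$ — exactly the Gamma values appearing in the theorem. I would identify $r_\beta$ as the threshold at which the majorant $\sum_k b_k\,t^{\beta k}/M^{k}$ ceases to be summable (a root/ratio computation) and then replace each $\Gamma(m\beta+1)$ by its bound from \eqref{boundGamma}, using \eqref{boundGamma0} on the factor $\Gamma(\beta+1)$. The two-sided nature of \eqref{boundGamma}, namely $x^{x-\gamma}/e^{x-1}<\Gamma(x)<x^{x-1/2}/e^{x-1}$, yields exactly the two admissible substitutions: choosing the exponent $x-\gamma$ versus $x-1/2$ gives case (i), valid for all $\beta>0$, and case (ii), valid once $\beta>\gamma-1/2$ (the condition that keeps the relevant exponent of $(3\beta+1)$ in range). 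The powers $2\beta+1/2$ and $\beta+(1/2-\gamma)$, resp. $-\beta+\gamma-1/2$, are just the bookkeeping of these replacements. Letting $\beta\uparrow1$ in each closed form then gives $r_\beta\uparrow Mc_1$ and $r_\beta\uparrow Mc_2$, consistent with the classical radius $\pi$ underlying \eqref{serie-u}, and uniform convergence on compact subsets of $(0,r_\beta)$ is immediate from the M-test.

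For the equation, on $(0,r_\beta)$ both \eqref{serie-goal} and its formal Caputo derivative converge uniformly on compacta, so I would differentiate term by term. Using ${^*D}^\beta_t\, t^{\beta k}=\frac{\Gamma(\beta k+1)}{\Gamma(\beta(k-1)+1)}t^{\beta(k-1)}$ for $k\ge1$ and ${^*D}^\beta_t(\mathrm{const})=0$,
\[
{^*D}^\beta_t w(t) = \sum_{k\ge0}\frac{E^\beta_{k+1}}{M^{k+1}}\frac{t^{\beta k}}{\Gamma(\beta k+1)},
\]
while the Cauchy product (licensed by absolute convergence) gives
\[
w-w^2 = \sum_{k\ge0}\frac{1}{M^k}\Big(E^\beta_k - \Gamma(\beta k+1)\sum_{i+j=k}\frac{E^\beta_i E^\beta_j}{\Gamma(\beta i+1)\Gamma(\beta j+1)}\Big)\frac{t^{\beta k}}{\Gamma(\beta k+1)}.
\]
Equating coefficients of $t^{\beta k}/\Gamma(\beta k+1)$, equation \eqref{eq-log-mainTHM} is equivalent to the single recursion $E^\beta_{k+1}=E^\beta_k-P_k$ for all $k\ge0$, where $P_k:=\Gamma(\beta k+1)\sum_{i+j=k}E^\beta_i E^\beta_j/(\Gamma(\beta i+1)\Gamma(\beta j+1))$. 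For $k=0$ this is the logistic constraint \eqref{log-constr} (fixing $w(0)=E^\beta_0=1/2$); for even $k\ge2$, where $E^\beta_k=0$, it reduces to \eqref{eulerBetaN}; and for odd $k$ it is the consistency identity $P_k=E^\beta_k$, which forces the even-indexed coefficient $E^\beta_{k+1}$ to vanish.

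The hard part is twofold. On the analytic side it is the nonlinear convolution in \eqref{eulerBetaN}: without the $\beta=1$ cancellation one cannot expect sharp asymptotics for $|E^\beta_k|$, so the delicate step is to manufacture a majorant whose summability threshold admits the closed form above — this is where the two cases and the numerical values $c_1,c_2$ actually live, and I expect the careful optimization of \eqref{boundGamma}–\eqref{boundGamma0} to be the most laborious point. On the algebraic side the subtle ingredient is the vanishing of the even-indexed numbers $E^\beta_{2m}$, $m\ge1$ (equivalently the odd-$k$ consistency $P_k=E^\beta_k$): at $\beta=1$ it follows from the symmetry $u(t)+u(-t)=1$, but for $\beta\in(0,1)$ the Caputo derivative destroys that symmetry, so the identity must be extracted directly from the recursion. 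I would therefore isolate it as a separate lemma and prove it by induction on $m$, using the same Beta-function bookkeeping displayed in the second line of \eqref{eulerBetaN}.
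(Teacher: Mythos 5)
Your verification that \eqref{serie-goal} solves \eqref{eq-log-mainTHM} follows the paper's route exactly: term-by-term Caputo differentiation, Cauchy product, and coefficient matching reducing the equation to $E^\beta_{k+1}=E^\beta_k-P_k$, with the even-indexed coefficients vanishing because $1-2E^\beta_0=0$ kills the boundary terms of the convolution and induction kills the rest. Your decision to isolate $E^\beta_{2m}=0$ as a lemma proved by induction is, if anything, more careful than the paper, which only asserts it after displaying the first few identities.

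The gap is in the convergence half, and it is exactly at the point you flag and then do not resolve. Your majorant recursion $b_{k+1}\le \frac{\Gamma(\beta k+1)}{\Gamma(\beta k+\beta+1)}\sum_{i+j=k}b_ib_j$ cannot be closed by a geometric induction: if $b_i\le C\rho^i$ the convolution produces $(k+1)C^2\rho^k$, and the Gamma ratio only contributes a factor of order $(\beta k)^{-\beta}$, leaving a divergent $k^{1-\beta}$ for $\beta<1$. You acknowledge this obstruction but then propose to "identify $r_\beta$ as the threshold at which the majorant ceases to be summable," which presupposes a sharp bound on $b_k$ that the sketch never produces; in particular nothing in your argument isolates the specific ratio $\Gamma(2\beta+1)/\Gamma(3\beta+1)$ from which both closed forms of $r_\beta$ actually come. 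The paper's mechanism is different and is the essential missing idea: it unrolls the recursion explicitly for the surviving odd-indexed coefficients, writing
\begin{align*}
\frac{E^\beta_{n+1}}{\Gamma((n+1)\beta+1)}=\frac{(-2)^{n/2}}{2}\left(\frac{E^\beta_1}{\Gamma(\beta+1)}\right)^{\frac{n}{2}+1}\left(\prod_{j=1}^{n/2}A_j+R_n\right),\qquad A_j=\frac{\Gamma(2j\beta+1)}{\Gamma((2j+1)\beta+1)},
\end{align*}
bounds the remainder $R_n$ by the leading product using the monotonicity $A_j<A_{j-1}$, and then estimates $\prod_j A_j\le A_1^{n/2}$, which yields the genuinely geometric bound \eqref{tmp-bound} with ratio $2\frac{E^\beta_1}{\Gamma(\beta+1)}\frac{\Gamma(2\beta+1)}{\Gamma(3\beta+1)}$. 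Only after this reduction do the Gamma inequalities \eqref{boundGamma}--\eqref{boundGamma0} enter, the two choices of exponent ($x-\gamma$ versus $x-1/2$) producing cases (i) and (ii) as you guessed. Without the product representation and the control of $R_n$, the explicit radii in the statement are not reachable from your sketch. (A minor further point: at $\beta=1$ the radii tend to $Mc_1\approx 3.074M$ and $Mc_2\approx 3.117M$, which are close to but not equal to the classical radius $\pi$, so the consistency check you invoke is only heuristic.)
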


\begin{proof}
We first show that \eqref{serie-goal} solves \eqref{eq-log-mainTHM} pointwise and then we study the convergence of \eqref{serie-goal}.\\
{\bf 1)} The Riemann-Liouville derivative of $w$ gives the series
\begin{align*}
D^\beta_t w(t) = \sum_{k\geq 0} \frac{E^\beta_k}{M^k} \frac{t^{\beta k- \beta}}{\Gamma(\beta k +1- \beta)}= E^\beta_0 \frac{t^{- \beta}}{\Gamma(1- \beta)} +  \sum_{k\geq 1} \frac{E^\beta_k}{M^k} \frac{t^{\beta k- \beta}}{\Gamma(\beta k +1- \beta)}
\end{align*}
that is, the Caputo derivative of $w$ is given by
\begin{align*}
{^*D}^\beta_t w(t) = D^\beta_t \left(w(t) - w(0) \right) =  \sum_{k\geq 0} \frac{E^\beta_{k+1}}{M^{k+1}} \frac{t^{\beta k}}{\Gamma(\beta k +1)}.
\end{align*}
Let $Lw = w-w^2$ be the logistic operator. From \eqref{serie-goal} we have that
\begin{align*}
L w(t) = & \sum_{k\geq 0} \frac{E^\beta_k}{M^k} \frac{t^{\beta k}}{\Gamma(\beta k +1)} - \sum_{k \geq 0}\sum_{s \geq 0} \frac{E^\beta_k}{M^k} \frac{t^{\beta k}}{\Gamma(\beta k +1)} \frac{E^\beta_s}{M^s} \frac{t^{\beta s}}{\Gamma(\beta s +1)}\\
= & E^\beta_0 - E^\beta_0 E^\beta_0\\
& + \left( \frac{E^\beta_1 }{\Gamma(\beta +1)} - 2\frac{E^\beta_0 E^\beta_1}{\Gamma(\beta +1)} \right) \frac{t^\beta}{M}\\
& + \left( \frac{E^\beta_2}{\Gamma(2\beta +1)} - 2 \frac{E^\beta_0 E^\beta_2}{\Gamma(2\beta +1)} - \frac{E^\beta_1 E^\beta_1}{\Gamma(\beta +1)\Gamma(\beta +1)} \right) \frac{t^{2\beta}}{M^2}\\
& + \left( \frac{E^\beta_3}{\Gamma(3\beta +1)} - 2 \frac{E^\beta_0 E^\beta_3}{\Gamma(3\beta +1)} - 2 \frac{E^\beta_1 E^\beta_2}{\Gamma(\beta +1)\Gamma(2\beta +1)} \right) \frac{t^{3\beta}}{M^3}\\
& + \left(\frac{E^\beta_4}{\Gamma(4\beta +1)} - 2 \frac{E^\beta_0 E^\beta_4}{\Gamma(4\beta +1)} -2\frac{E^\beta_1 E^\beta_3}{\Gamma(\beta+1)\Gamma(3\beta +1)} - \frac{E^\beta_2 E^\beta_2}{\Gamma(2\beta +1) \Gamma(2\beta +1)} \right) \frac{t^{4\beta}}{M^4}\\
& + \ldots .
\end{align*}
Under the constrains \eqref{log-constr} we obtain the coefficients $E^\beta_k$ for $k>1$. In particular, we have that
\begin{align*}
\frac{1}{M}\frac{E^\beta_2}{\Gamma(\beta +1)} = & \frac{E^\beta_1}{\Gamma(\beta +1)} - 2 \frac{E^\beta_0 E^\beta_1}{\Gamma(\beta +1)} \\
\frac{1}{M}\frac{E^\beta_3}{\Gamma(2\beta +1)} = &  - \frac{E^\beta_1 E^\beta_1}{\Gamma(\beta +1)\Gamma(\beta +1)}\\
\frac{1}{M}\frac{E^\beta_4}{\Gamma(3\beta+1)} = & \frac{E^\beta_3}{\Gamma(3\beta +1)} - 2 \frac{E^\beta_0 E^\beta_3}{\Gamma(3\beta +1)}\\
\frac{1}{M}\frac{E^\beta_5}{\Gamma(4\beta +1)} = & -2\frac{E^\beta_1 E^\beta_3}{\Gamma(\beta+1)\Gamma(3\beta +1)} \\
\frac{1}{M}\frac{E^\beta_6}{\Gamma(5\beta +1)} = & \frac{E^\beta_5}{\Gamma(5\beta +1)} - 2 \frac{E^\beta_0 E^\beta_5}{\Gamma(5\beta +1)}\\
\frac{1}{M}\frac{E^\beta_7}{\Gamma(6\beta +1)} = & - 2 \frac{E^\beta_1 E^\beta_5}{\Gamma(\beta+1)\Gamma(5\beta +1)} - \frac{E^\beta_3 E^\beta_3}{\Gamma(3\beta +1) \Gamma(3\beta +1)}\\
\frac{1}{M}\frac{E^\beta_8}{\Gamma(7\beta +1)} =&  \frac{E^\beta_7}{\Gamma(7\beta +1)} - 2 \frac{E^\beta_0 E^\beta_7}{\Gamma(7\beta +1)} \\
\frac{1}{M}\frac{E^\beta_9}{\Gamma(8\beta +1)} = & -2 \frac{E^\beta_1 E^\beta_7}{\Gamma(\beta+1) \Gamma(7\beta +1)} - 2 \frac{E^\beta_3 E^\beta_5}{\Gamma(3\beta +1) \Gamma(5\beta +1)}\\
\frac{1}{M}\frac{E^\beta_{11}}{\Gamma(10 \beta +1)} = & -2 \frac{E^\beta_1 E^\beta_9}{\Gamma(\beta +1) \Gamma(9 \beta +1))} - 2 \frac{E^\beta_3 E^\beta_7}{\Gamma(3\beta +1) \Gamma (7 \beta +1)} - \frac{E^\beta_5 E^\beta_5}{\Gamma(5\beta +1 ) \Gamma(5 \beta +1)}
\end{align*}
that is
\begin{align*}
\frac{1}{M} E^\beta_{n+1}= & -  \sum_{\begin{subarray}{c} i,j\\ i+j=n \end{subarray}} \frac{\Gamma(n\beta +1)}{\Gamma(i\beta +1)\Gamma(j\beta +1)}E^\beta_i E^\beta_j .
\end{align*}
\noindent As a consequence of the logistic conditions \eqref{log-constr} we have that $E^\beta_{2k}=0$. \\

{\bf 2)} Now we prove uniform convergence of \eqref{serie-u}. We first rewrite the coefficient $E^\beta_k$ as follows
\begin{align*}
E^\beta_1=&1/4 = + \Gamma(\beta +1) \left( \frac{E^\beta_1}{\Gamma(\beta +1)}\right) \\
E^\beta_3 = & - \Gamma(2\beta +1) \left( \frac{E^\beta_1}{\Gamma(\beta +1)} \right)^2\\
E^\beta_5 = & + 2\, \Gamma(4\beta +1) \frac{ \Gamma(2\beta +1)}{\Gamma(3\beta +1) } \left( \frac{E^\beta_1}{\Gamma(\beta +1)} \right)^3\\
%= & 2 \, \Gamma(4\beta +1) \frac{E^\beta _3}{\Gamma(3\beta +1)} \frac{E^\beta_1}{\Gamma(\beta +1)}  \\
E^\beta_7 = & - 2^2 \Gamma(6\beta +1) \left( - \frac{\Gamma(4\beta +1) \Gamma(2\beta +1)}{\Gamma(5\beta +1)\Gamma(3\beta +1)} +  \frac{1}{4} \frac{\Gamma(2\beta+1)\Gamma(2\beta +1)}{\Gamma(3\beta +1)\Gamma(3\beta +1)} \right) \left( \frac{E^\beta_1}{\Gamma(\beta +1)} \right)^4\\
%= & 2\Gamma(6\beta +1) \frac{E^\beta_5}{\Gamma(5\beta +1)}\frac{E^\beta_1}{\Gamma(\beta +1)} - \Gamma(6\beta +1)\left( \frac{E^\beta_3}{\Gamma(3\beta +1)} \right)^2\\
E^\beta_9 = & +2^3 \Gamma(8\beta +1) \left( \frac{\Gamma(6\beta +1)\Gamma(4\beta +1)\Gamma(2\beta +1)}{\Gamma(7\beta +1) \Gamma(5\beta +1) \Gamma(3\beta +1)} + \frac{1}{2}  \frac{\Gamma(4\beta +1)}{\Gamma(5\beta +1)} \left( \frac{\Gamma(2\beta +1)}{\Gamma(3\beta +1)}\right)^2 \right) \left( \frac{E^\beta_1}{\Gamma(\beta +1)} \right)^5  \\
%= & -2 \Gamma(8\beta +1) \frac{E^\beta_7}{\Gamma(7\beta +1)} \frac{E^\beta_1}{\Gamma(\beta +1)} -? \Gamma(8\beta +1) 
\end{align*}
from which we get that, for $n\geq 2$,
\begin{align*}
\frac{E^\beta_{n+1}}{\Gamma((n+1)\beta + 1)} =  \frac{(-2)^{n/2}}{2} \left( \frac{E^\beta_1}{\Gamma(\beta +1)} \right)^{\frac{n}{2}+1}\left( \prod_{j=1}^{n/2} \frac{\Gamma(2j\beta +1)}{\Gamma((2j+1)\beta +1)} + R_n \right).
\end{align*}
Since
\begin{align}
\label{Aj}
A_j:=\frac{\Gamma(2j\beta +1)}{\Gamma((2j+1)\beta +1)} < A_{j-1}
\end{align}
we obtain that
\begin{align*}
R_n \leq \prod_{j=1}^{n/2} \frac{\Gamma(2j\beta +1)}{\Gamma((2j+1)\beta +1)}
\end{align*}
and therefore, we have that
\begin{align*}
\bigg| \frac{E^\beta_{n+1}}{\Gamma((n+1)\beta + 1)}  \bigg| \leq & 2^{n/2} \left( \frac{E^\beta_1}{\Gamma(\beta +1)} \right)^{\frac{n}{2}+1} \, \prod_{j=1}^{n/2} \frac{\Gamma(2j\beta +1)}{\Gamma((2j+1)\beta +1)}.
\end{align*}
From \eqref{Aj}, we also obtain that
\begin{align*}
\prod_{j=1}^{n/2} \frac{\Gamma(2j\beta +1)}{\Gamma((2j+1)\beta +1)} \leq \left( \frac{\Gamma(2\beta +1)}{\Gamma(3\beta +1)} \right)^{n/2}
\end{align*}
and therefore,
\begin{align}
\bigg| \frac{E^\beta_{n+1}}{\Gamma((n+1)\beta + 1)}  \bigg| \leq &  \frac{E^\beta_1}{\Gamma(\beta +1)} \left( 2  \frac{E^\beta_1}{\Gamma(\beta +1)} \frac{\Gamma(2\beta +1)}{\Gamma(3\beta +1)} \right)^{n/2}.
\label{tmp-bound}
\end{align}
Notice that $E^\beta_{2k}=0$. We now continue our analysis from \eqref{tmp-bound} and obtain two related results: \\

\noindent {\it i)} By taking into account \eqref{boundGamma} and the left-side of \eqref{boundGamma0} for $1/\Gamma(\beta +1)$, we get that
\begin{align}
\bigg| \frac{E^\beta_{n+1}}{\Gamma((n+1)\beta + 1)}  \bigg| \leq & \frac{E^\beta_1}{\Gamma(\beta +1)} \left( \frac{1}{2^\beta} \left( \frac{2\beta+1}{3\beta+1}\right)^{2\beta +1/2} \frac{e^\beta}{(3\beta+1)^{\beta+1/2-\gamma}} \right)^{n/2} =a_{n}.
\end{align}
The series
\begin{align*}
\sum_{n \geq 0} (- 1)^n a_{2n}\, t^n 
\end{align*}
uniformly converges in $(0, r_\beta)$ with $\beta \in (0,1)$ and
\begin{align*}
 r_\beta = \frac{2^\beta}{e^\beta} \left( \frac{3\beta+1}{2\beta +1}\right)^{2\beta + 1/2} (3\beta +1)^{\beta + (1/2-\gamma)} \leq 3.074366 .
\end{align*}

\noindent {\it ii)} 
By taking into account \eqref{boundGamma}, we get
\begin{align*}
\bigg| \frac{E^\beta_{n+1}}{\Gamma((n+1)\beta + 1)}  \bigg| \leq & \frac{E^\beta_1}{\Gamma(\beta +1)} \left( \frac{1}{2} \left(\frac{2\beta+1}{3\beta +1} \right)^{2\beta +1/2} \frac{e^{2\beta}}{(3\beta +1)^{\beta+1/2-\gamma}} \frac{1}{(\beta+1)^{\beta +1 -\gamma}} \right)^{n/2} =b_{n}.
\end{align*} 
The series
\begin{align*}
\sum_{k\geq 0} (-1)^n b_{2n} \, t^n
\end{align*}
uniformly converges in $(0, r_\beta)$ with $\beta>\gamma - 1/2$ and
\begin{align*}
r_\beta = \frac{2}{e^{2\beta} (\beta +1)^{1/2}} \left( \frac{3\beta+1}{2\beta+1} \right)^{2\beta +1/2} \left( 3\beta +1 \right)^{-\beta + \gamma - 1/2} \leq   3.116623.
\end{align*}
\end{proof}

\begin{remark}
By taking into account \eqref{boundBeta}, from \eqref{tmp-bound} we get that
\begin{align*}
\bigg| \frac{E^\beta_{n+1}}{\Gamma((n+1)\beta + 1)}  \bigg| \leq & \frac{E^\beta_1}{\Gamma(\beta +1)} \left( 2 \frac{E^\beta_1}{\Gamma(\beta +1)} \frac{(3\beta +1)}{\Gamma(\beta +1)} B(2\beta+1, \beta+1) \right)^{n/2} \\
\leq & \frac{E^\beta_1}{\Gamma(\beta +1)} \left( 2 \frac{E^\beta_1}{\Gamma(\beta +1)} \frac{(3\beta +1)}{\Gamma(\beta +1)} \frac{1}{(\beta +1)(2\beta +1)} \right)^{n/2} \\
\leq & \frac{E^\beta_1}{\Gamma(\beta +1)} \left( \frac{1}{2^{2\beta -1}}  \frac{(3\beta +1)}{(3\beta +1)+ 2\beta^2} \right)^{n/2}
\end{align*}
and the series
\begin{align*}
\frac{1}{4\, \Gamma(\beta +1)} \sum_{k\geq 0} \left( - \frac{1}{2^{2\beta -1}}  \frac{(3\beta +1)}{(3\beta +1)+ 2\beta^2} \, t \right)^k
\end{align*}
is uniformly convergent in $(0, r_\beta)$ with $\beta >1/2$ and 
\begin{align*}
r_\beta = 2^{2\beta -1} \left( 1 + \frac{2\beta^2}{2\beta +1} \right) \leq 3.
\end{align*}
From the last series we obtain that, for $M=1$
\begin{align*}
w(t) \leq &  \frac{1}{2} + \frac{1}{4\, \Gamma(\beta +1)} \left( 1- \frac{2}{2^{2\beta}} \frac{3\beta +1}{3\beta +1 + 2\beta^2} t \right)^{-1}, \quad t < r_\beta.
\end{align*}
Notice that, we easily obtain the analogue result given for $M>1$.
\end{remark}

\subsection{The Euler's $\Phi$-numbers}
The Euler's $\beta$-numbers have been introduced in the previous section as the sequence
\begin{align*}
E^\beta_0=1/2, \quad E^\beta_1= 1/4, \quad E^\beta_{k+1} \textrm{ as in } \eqref{eulerBetaN} \textrm{ for } k \in 2\mathbb{N}
\end{align*}
such that
\begin{align*}
w(t) = \sum_{k \geq 0} E^\beta_k \frac{t^{\beta k}}{\Gamma(\beta k +1)}
\end{align*}
solves the fractional logistic equation
\begin{align*}
{^*D}^\beta_t w = w-w^2.
\end{align*}
The Euler's $\Phi$-numbers can be therefore defined as the sequence $\{E^\Phi_k\}_{k \in \mathbb{N}_0}$ such that
\begin{align*}
v(t) = \sum_{k \geq 0} E^\Phi_k \frac{t^{\beta k}}{\Gamma(\beta k +1)}
\end{align*}
solves the fractional logistic equation
\begin{align*}
\mathfrak{D}^\Phi_t v = v-v^2.
\end{align*}

\subsection{Integro-differential equation}
Let us focus on the Sonine kernels
\begin{align*}
\kappa(t) = \frac{t^{-\beta}}{\Gamma(1- \beta)}, \qquad \overline{\kappa}(t) =  \frac{t^{\beta-1}}{\Gamma(\beta)}
\end{align*}
for which we have that
\begin{align*}
(\kappa * \overline{\kappa} )(t):= \int_0^t \kappa(s) \overline{\kappa}(t-s)ds = \frac{1}{\Gamma(1-\beta) \Gamma(\beta)} \int_0^1 s^{(1- \beta) - 1} (1-s)^{\beta -1} ds = 1.
\end{align*}
By considering the Laplace transform techniques, we immediately have that
\begin{align*}
({^*D}^\alpha_t u * \overline{\kappa})(t) = {^*D}^{\alpha - \beta}_t u
\end{align*}
and
\begin{align*}
\frac{d}{d t} ({^*D}^\beta_t u * \overline{\kappa})(t) = \frac{d u}{dt}.
\end{align*}
The equation \eqref{eq-log-mainTHM} can be therefore rewritten as
\begin{align}
\frac{dw}{d t} = \frac{1}{M} \frac{1}{\Gamma(\beta)} \int_0^t (w(s) -w^2(s)) (t-s)^{\beta -1} ds
\label{integrodif}
\end{align}
where a singular kernel is involved. We underline that the function \eqref{serie-goal} solves the integro-differential equation \eqref{integrodif} and the fractional logistic equations \eqref{eq-log-mainTHM}.

\subsection{Acknowledgement}

We thank Sima Sarv Ahrabi for pointing out to the authors attention the paper \cite{attempt1}.

\end{document}